\newtheorem{thm}{Theorem}
\newtheorem{prop}{Proposition}[section]
\newtheorem{cor}[prop]{Corollary}
\newtheorem{lem}[prop]{Lemma}
\renewcommand{\a}{\mathbf{a}}
\renewcommand{\b}{\mathbf{b}}
\renewcommand{\A}{\mathbf{A}}
\newcommand{\irreg}{\mathrm{irreg}}
\newcommand{\one}{\mathbf{1}}
\renewcommand{\hat}{\widehat}
\newcommand{\fqb}{{\overline{\F}_q}}
\newcommand{\comp}{\mathrm{comp}}
\begin{document}

\title{Factorization Statistics of Restricted Polynomial Specializations over Large Finite Fields}
\author{Alexei Entin}
\date{}
\maketitle

\begin{abstract} For a polynomial $F(t,A_1,\ldots,A_n)\in\F_p[t,A_1,\ldots,A_n]$ ($p$ being a prime number) we study the factorization statistics of its specializations $$F(t,a_1,\ldots,a_n)\in\F_p[t]$$ with $(a_1,\ldots,a_n)\in S$, where $S\ss\F_p^n$ is a subset, in the limit $p\to\ity$ and $\deg F$ fixed. We show that for a sufficiently large and regular subset $S\ss\F_p^n$, e.g. a product of $n$ intervals of length $H_1,\ldots,H_n$ with $\prod_{i=1}^nH_n>p^{n-1/2+\eps}$, the factorization statistics is the same as for unrestricted specializations (i.e. $S=\F_p^n$) up to a small error. This is a generalization of the well-known P\'olya-Vinogradov estimate of the number of quadratic residues modulo $p$ in an interval.
\end{abstract}

\section{Introduction}

Let $p$ be a prime. A well-known result of P\'olya and Vinogradov \cite{Pol18}\cite{Vin18} states that the number of quadratic residues (as well as the number of nonresidues) modulo $p$ in the interval $I=\{\be,\be+1,\ldots,\be+H-1\}$ of length $H$ is $H/2+O(p^{1/2}\log p)$. . This result can be restated as follows: the number of $a\in I$ such that the polynomial $t^2-a$ is reducible (or irreducible) modulo $p$ is $H/2+O(p^{1/2}\log p)$. Vinogradov extended his result to higher power residues \cite{Vin27}, he has shown that the number of $a\in I$ such that the polynomial $t^k-a$ has a root modulo $p$ is
$H/(p-1,k)+O(p^{1/2}\log p)$. Our aim is to generalize these results to specializations of any polynomial in $n$ variables over any finite field.

Shparlinski \cite{Shp89}\cite{Shp92} has studied the following related problem: for $n\ge 2$ consider the set of polynomials $$\mathcal{S}=\{f=t^n+a_1t^{n-1}+\ldots+a_n:a_i\in I_i\},$$ $$I_i=\{\be_i,\ldots,\be_i+H_i-1\},\be_i\in\F_p,H_i\le p$$
and let $d_1,\ldots,d_r$ be natural numbers with $\sum d_i=n$. We say that a squarefree polynomial $f\in\F_p[t],\deg f=n$ has factorization type $(d_1,\ldots,d_n)$ if one can write $f=\prod_{i=1}^r P_i$ with $P_i\in\F_p[t]$ irreducible with $\deg P_i=d_i$ (in particular $f$ is irreducible iff it has factorization type $(n)$). Denote by $D(d_1,\ldots,d_r)$ the set of monic squarefree $f\in\F_p[t]$ with factorization type $(d_1,\ldots,d_r)$. Shparlinski \cite{Shp92}*{Theorem 3} has shown that in the limit $p\to\ity$ the number of $f\in\mathcal{S}$ with factorization type $(d_1,\ldots,d_r)$ is
\begin{equation}|\mathcal{S}\cap D(d_1,\ldots,d_r)|=\label{shpa1}\gam(d_1,\ldots,d_r)\prod_{i=1}^nH_i+O_{n}\lb p^{n-1}\log^n p\rb,\end{equation}
where $\gam(d_1,\ldots,d_r)$ is the probability that a random permutation in $S_n$ has cycle structure $(d_1,\ldots,d_r)$ and the notation $O_{n}$ means that the implicit constant may depend on $n$.
In \cite{Shp11} he obtained a similar result for the set of trinomials with coefficients lying in a rectangle
$$\mathcal{S}=\{f=t^n+a_1t+a_2:a_1\in I_1,a_2\in I_2\}, I_i=\{\be_i,\be_i+H_i-1\},\be_i\in\F_p, H_i\le p.$$ He has shown that for any fixed $\eps>0$ if $H_1,H_2\ge p^{1/4+\eps}$ and $H_1H_2\ge p^{1+\eps}$ the number of $f\in\mathcal{S}$ with factorization type $(d_1,\ldots,d_r)$ is asymptotically 
\begin{equation}\label{shpa2}|\mathcal{S}\cap D(d_1,\ldots,d_r)|\sim\gam(d_1,\ldots,d_r)H_1H_2.\end{equation}
 Our main results will imply as special cases weaker versions of the above results with an error term of $O_{n}\lb p^{n-1/2}\log^n p\rb$ in
(\ref{shpa1}) and the condition $H_1H_2\ge p^{3/2+\eps}$ for (\ref{shpa2}). While we obtain weaker results in these special cases, our results apply to polynomials of a far more general shape. Some further interesting applications to the distribution of irreducible polynomials will be described in the next section.

We now describe the general problem we want to consider, which generalizes all the problems discussed above. With $p$ still denoting a prime let $q=p^k$ be a power of $p$ and $\F_q$ the field with $q$ elements. Let $F(t,A_1,\ldots,A_n)\in\F_q[t,A_1,\ldots,A_n]$ be a squarefree polynomial without irreducible factors lying in $\F_q[t^p,A_1,\ldots,A_n]$. These two conditions are equivalent to the single condition $\Disc_tF\neq 0$ ($\Disc_tF$ denotes the discriminant of $F$ viewed as a polynomial in the variable $t$. This discriminant is a polynomial in $A_1,\ldots,A_n$). Denote $d=\deg_t F$. We fix the total degree $\deg F$ and consider the limit $q\to\ity$. We consider specializations $F(t,a_1,\ldots,a_n)\in\F_q[t]$ with $a_i\in\F_q$. To simplify notation we denote $$\A=(A_1,\ldots,A_n),\a=(a_1,\ldots,a_n).$$ The condition $\Disc_t F\neq 0$ insures that all but $O_{n,\deg F}(q^{n-1})$ of these specializations are squarefree of degree $d$ (the notation $O_{n,\deg F}$ means that the implicit constant depends only on $n,\deg F$) and we may consider the prime factorization $F(t,\a)=\prod_{i=1}^dP_i(t)$ with $\deg P_i=d_i$. For such an $\a$ denote by $C(F(t,\a))$ the conjugacy class in $S_d$ of permutations with cycle structure $(d_1,\ldots,d_r)$. We call it the factorization class of $F(t,\a)$.

It is known that for any fixed conjugacy class $C$ of $S_d$ the probability that $C(F(t,\a))=C$ as $\a$ ranges over $\F_q^n$ is determined by the Galois group $G$ of the polynomial $F(t,\A)$ over the field $\F_q(\A)$ together with its standard action on the roots,
up to an error term of $O_{n,\deg F}(q^{-1/2})$. More precisely if $\F_{q^\nu}$ is the algebraic closure of $\F_q$ in the splitting field of $F(t,\A)$ over $\F_q(\A)$ and $\pi:G\to\Gal\lb\F_{q^\nu}/\F_q\rb$ is the restriction map, then for any $S_d$-conjugacy class $C$ we have
$$\card{\{\a\in\F_q^n|C(F(t,\a))=C\}}=\frac{\nu\card{C\cap\pi^{-1}(\Fr_q)}}{|G|}q^n\lb 1+O_{n,\deg F}\lb q^{-1/2}\rb\rb,$$ where $\Fr_q$ denotes the $q$-Frobenius automorphism of $\F_{q^\nu}$ (see Corollary \ref{corcheb} below).
This follows from an explicit Chebotarev density theorem for varieties over finite fields, see \cite{Ent18_}*{Theorem 3} or \cite{ABR15}*{Theorem A.4}.

For many interesting examples of polynomials $F(t,\A)$ this Galois group has been computed. See \cite{Ent18_} for a fairly general result of this type and further references to the literature and \cite{BBF18},\cite{Kar16_} for further examples.

Now suppose that we take a subset $S\ss\F_q^n$ and only allow specializations $\a\in S$. We are going to show that if $S$ is sufficiently large and regular in a sense we define shortly the distribution of $C(F(t,\a))$ remains the same up to a small error term as $q\to \ity$ and $n,\deg F$ are fixed, more precisely we would have
\begin{multline*}\card{\{\a\in S|C(F(t,\a))=C\}}=\\=\frac{\nu\card{C\cap\pi^{-1}\lb\Fr_q\rb}}{|G|}|S|\lb 1+O_{n,\deg F}\lb q^{-1/2}\cdot\irreg(S)\rb\rb,\end{multline*} where $\irreg(S)$ is a measure of the size and irregularity of $S$ that we define next.

Let $f:\F_q^n\to\C$ ($\C$ denotes the field of complex numbers) be a function. We define its Fourier transform by
\begin{equation}\label{fourier}\hat{f}(\b)=\frac{1}{q^n}\sum_{\a\in\F_q^n}f(\a)e^{-2\pi i\frac{\mathrm{tr}_{\F_q/\F_p}(\a\cdot\b)}{p}}\end{equation} (for $\a=(a_1,\ldots,a_n),\b=(b_1,\ldots,b_n)$ we denote $\a\cdot\b=a_1b_1+\ldots+a_nb_n$). For a subset $S\ss\F_q^n$ we define its \emph{irregularity} to be
\begin{equation}\label{irreg}\irreg(S)=\frac{q^n}{|S|}\sum_{\b\in\F_q^n}\abs{\hat{\one_S}(\b)},\end{equation} where $\one_S$ denotes the indicator function of $S$.
For example if $q=p$ is a prime and $S=I_1\times\ldots\times I_n$ where $I_i\ss\F_p$ are intervals or arithmetic progressions of length $H_i$ then 
\begin{equation}\label{irregint}\irreg(S)\le \frac {(9p\log p)^n}{\prod_{i=1}^nH_i}\end{equation} (see Appendix). Our main result is the following

\begin{thm}\label{thm1} Let $S\ss\F_q^n$ be a subset, $F(t,\A)\in\F_q[t,\A]$ a squarefree polynomial without irreducible factors in $\F_q[t^p,\A]$ (equivalently $\Disc_tF\neq 0$) and $\deg_t F=d$. Assume that $p>d$. Let $G\ss S_d$ be its Galois group over $\F_q(\A)$ with the standard permutation action on the roots, $\F_{q^\nu}$ the algebraic closure of $\F_q$ in its splitting field and $\pi:G\to\Gal\lb\F_{q^\nu}/\F_q\rb$ the restriction map. Let $C$ be a conjugacy class of $S_d$. Then
\begin{multline*}\card{\{\a\in S|C(F(t,\a))=C\}}=\\=\frac{\nu\card{C\cap\pi^{-1}\lb \Fr_q\rb}}{|G|}|S|\lb 1+O_{n,\deg F}\lb q^{-1/2}\cdot\irreg(S)\rb\rb,\end{multline*} the implicit constant only depending on $n,\deg F$.\end{thm}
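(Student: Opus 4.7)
The plan is to combine Fourier expansion of $\one_S$ on $\F_q^n$ with a Chebotarev estimate twisted by a nontrivial additive character. Writing
$$\one_S(\a)=\sum_{\b\in\F_q^n}\hat{\one_S}(\b)\,e^{2\pi i\,\mathrm{tr}_{\F_q/\F_p}(\a\cdot\b)/p}$$
and interchanging summation, the quantity to estimate becomes
$$\card{\{\a\in S:C(F(t,\a))=C\}}=\sum_{\b\in\F_q^n}\hat{\one_S}(\b)\,T(\b),\qquad T(\b):=\sum_{\a\in\F_q^n}e^{2\pi i\,\mathrm{tr}(\a\cdot\b)/p}\,\one_{C(F(t,\a))=C}.$$
The $\b=\mathbf0$ term produces the asserted main term: Corollary \ref{corcheb} gives $T(\mathbf0)=\frac{\nu|C\cap\pi^{-1}(\Fr_q)|}{|G|}q^n+O_{n,\deg F}(q^{n-1/2})$, and since $\hat{\one_S}(\mathbf0)=|S|/q^n$ this contributes $\frac{\nu|C\cap\pi^{-1}(\Fr_q)|}{|G|}|S|$ plus an error of size $O_{n,\deg F}(|S|q^{-1/2})$, which is absorbed into the final bound because $\irreg(S)\ge 1$.

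The heart of the argument is the character-twisted Chebotarev estimate
$$|T(\b)|\le C_{n,\deg F}\,q^{n-1/2}\qquad\text{for every }\b\neq\mathbf0.$$
I would prove this by running the same \'etale-cohomological argument used for Corollary \ref{corcheb} (\cite{Ent18_}*{Theorem 3}, \cite{ABR15}*{Theorem A.4}) after tensoring the relevant sheaf with the Artin--Schreier sheaf $\mathcal L_{\psi_\b}$ on $\mathbb A^n_{\F_q}$. Concretely, on the complement of $\{\Disc_tF=0\}$ there is a lisse $\overline{\Q}_\ell$-sheaf $\mathcal F_C$ whose Frobenius trace at $\a$ equals $\one_{C(F(t,\a))=C}$ (up to scaling by $|C|/|G|$); then $T(\b)$ is the sum of Frobenius traces of $\mathcal F_C\otimes\mathcal L_{\psi_\b}$, and Deligne's Weil~II bound on $H^{2n}_c$ and the dimensions of lower cohomologies give the square-root saving, uniformly in $\b\neq\mathbf0$.

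Granted this bound, the rest is bookkeeping: the contribution of $\b\neq\mathbf0$ to $\sum_\b\hat{\one_S}(\b)T(\b)$ is at most
$$C_{n,\deg F}\,q^{n-1/2}\sum_{\b\neq\mathbf0}|\hat{\one_S}(\b)|\;\le\;C_{n,\deg F}\,q^{n-1/2}\cdot\frac{|S|}{q^n}\irreg(S)\;=\;C_{n,\deg F}\,|S|\,q^{-1/2}\irreg(S),$$
by the definition \eqref{irreg} of $\irreg(S)$. Combined with the main term from $\b=\mathbf0$, this is exactly the statement of the theorem.

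The main obstacle is the twisted bound $|T(\b)|\ll q^{n-1/2}$: one must rule out that some geometric irreducible constituent of $\mathcal F_C\otimes\mathcal L_{\psi_\b}$ on $\mathbb A^n_{\overline{\F}_q}$ becomes geometrically constant, which would inflate $H^{2n}_c$ and destroy the square-root cancellation. Here the standing hypotheses enter crucially: $p>d$ forces the cover defined by $F(t,\A)$ to be tame, and the requirement that $F$ have no irreducible factor in $\F_q[t^p,\A]$ is exactly the condition ruling out an Artin--Schreier obstruction of the sort that could make such a twist trivial. Together these hypotheses should place us within the scope of the cited cohomological theorems uniformly in $\b\neq\mathbf0$, with implicit constants depending only on $n$ and $\deg F$.
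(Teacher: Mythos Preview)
Your overall architecture is exactly the paper's: Fourier-expand $\one_S$, isolate the $\b=\mathbf 0$ term via Corollary~\ref{corcheb}, and bound the remaining terms using a square-root estimate for the twisted sums $T(\b)$. Where you diverge from the paper is in how you propose to prove $|T(\b)|=O_{n,\deg F}(q^{n-1/2})$ for $\b\neq\mathbf 0$.

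The paper does \emph{not} run an $n$-dimensional cohomological argument. Instead it fibres: assuming $b_1\neq 0$, it freezes $a_2,\ldots,a_n$ and reduces to a one-variable sum $\sum_{a_1}\psi(-a_1b_1)\one_{C(F(t,a_1,a_2,\ldots,a_n))=C}$. For each generic fibre this is handled by parametrising the set $\{a_1:C(F(t,a_1,\ldots))=C_i\}$ by $\F_q$-points of an auxiliary curve $W_{C_i}$ covering $\A^1$, so the sum becomes an additive character sum along a nonconstant rational function on a curve of bounded genus, and the Weil bound applies. The hypothesis $p>d$ enters precisely here: it gives $p\nmid |G|$, hence $p\nmid\deg f_i$ for the covering maps $f_i$, which is what guarantees the Artin--Schreier extension $y^p-y=b_1f_i$ is nontrivial. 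Summing the resulting $O_{\deg F}(q^{1/2})$ over the remaining $q^{n-1}$ fibres gives the bound. This is elementary (only Weil for curves), makes the uniformity in $\b$ automatic, and avoids any discussion of geometric constancy of sheaves on $\A^n$.

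Your sheaf-theoretic route can be made to work, but as written it has a real gap at the point you yourself flag. You do not actually \emph{prove} that no constituent of $\mathcal F_C\otimes\mathcal L_{\psi_\b}$ is geometrically constant; you only assert that the hypotheses ``should'' suffice. Moreover, your attribution of roles to the hypotheses is off: the condition that $F$ have no irreducible factor in $\F_q[t^p,\A]$ is just $\Disc_tF\neq 0$, i.e.\ separability---it is not what blocks the Artin--Schreier obstruction. The correct argument is that $p>d$ forces $p\nmid|G|$, so every irreducible constituent of $\mathcal F_C$ has geometric monodromy of order prime to $p$, whereas $\mathcal L_{\psi_\b}$ for $\b\neq\mathbf 0$ has geometric monodromy $\Z/p$; hence no constituent of the tensor product can be geometrically trivial. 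You would also still owe a uniform-in-$\b$ bound on $\sum_i\dim H^i_c$, which the paper's curve-by-curve approach gets for free from Riemann--Hurwitz. In short: right skeleton, but the crucial step is left as a hope rather than an argument, and the paper's fibring trick is both simpler and complete.
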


\begin{cor}\label{corint} In the setting of Theorem \ref{thm1} assume that $q=p$ is a prime, $n,d,\eps>0$ are fixed and $S=I_1\times\ldots\times I_n$ is a product of intervals or arithmetic progressions of size $|I_i|=H_i$ with $\prod_{i=1}^nH_i\ge p^{n-1/2+\eps}$. Then as $p\to\ity$ the classes $C(F(t,\a)),\a\in S$ become equidistributed with respect to the measure assigning to each conjugacy class $C$ of $S_d$ the size of $C\cap\pi^{-1}\lb\Fr_q\rb$.\end{cor}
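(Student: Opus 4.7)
The plan is to deduce the corollary as a direct combination of Theorem~\ref{thm1} with the irregularity estimate~(\ref{irregint}) for boxes. Since $C(F(t,\a))$ is constant equal to $C$ on a set whose cardinality Theorem~\ref{thm1} counts as a fraction of $|S|$ times $1+O(q^{-1/2}\irreg(S))$, all I have to show is that under the hypothesis $\prod_i H_i\ge p^{n-1/2+\eps}$ the product $q^{-1/2}\irreg(S)$ tends to zero as $p\to\infty$.

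Concretely, I would first invoke Theorem~\ref{thm1} (with $q=p$) to write
\[
\card{\{\a\in S:C(F(t,\a))=C\}}=\frac{\nu\card{C\cap\pi^{-1}(\Fr_p)}}{|G|}|S|\bigl(1+O_{n,\deg F}(p^{-1/2}\irreg(S))\bigr).
\]
Then I would apply the estimate~(\ref{irregint}), whose hypotheses are exactly the setup of the corollary, to bound
\[
p^{-1/2}\irreg(S)\le p^{-1/2}\cdot\frac{(9p\log p)^n}{\prod_{i=1}^n H_i}=\frac{(9\log p)^n\,p^{n-1/2}}{\prod_{i=1}^n H_i}\le (9\log p)^n\,p^{-\eps},
\]
using $\prod_i H_i\ge p^{n-1/2+\eps}$ in the last step. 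Since $n$ and $\eps$ are fixed, $(9\log p)^n p^{-\eps}\to 0$ as $p\to\infty$.

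Therefore the proportion of $\a\in S$ with $C(F(t,\a))=C$ converges to $\nu\card{C\cap\pi^{-1}(\Fr_p)}/|G|$, which is the claimed equidistribution statement. There is no real obstacle here beyond correctly plugging~(\ref{irregint}) into Theorem~\ref{thm1}; the only subtle point worth noting is that the bound~(\ref{irregint}) is proved in the appendix and is what makes products of intervals ``regular enough'' for the error term of Theorem~\ref{thm1} to be genuinely small, and the threshold $p^{n-1/2+\eps}$ is precisely calibrated so that the factor $p^{-1/2}$ in Theorem~\ref{thm1} beats the $p^n/\prod H_i$ from the irregularity.
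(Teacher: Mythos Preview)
Your proposal is correct and follows exactly the same approach as the paper's proof, which consists of the single sentence ``Follows from Theorem~\ref{thm1} and (\ref{irregint}).'' You have simply spelled out the computation that makes this deduction explicit.
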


\begin{proof} Follows from Theorem \ref{thm1} and (\ref{irregint}).\end{proof}

{\bf Remark 1.} The case $n=1,d=2$ and $F(t,A)=t^2-A$ in the above corollary is the result of Polya and Vinogradov on the equidistribution of quadratic residues and nonresidues in intervals of length $p^{1/2+\eps}$. Indeed $C(F(t,a))$ is the identity class iff $a$ is a nonzero quadratic residue and the class of 2-cycles iff $a$ is a nonresidue. For this special case the result was extended to intervals of length $p^{1/4+\eps}$ by Burgess \cite{Bur57}, and this result remains the state of the art. Conjecturally it is still true for intervals of length $p^\eps$ and we conjecture that Corollary \ref{corint} also holds whenever $H_i\ge p^\eps$, but this remains out of reach even for the case $F(t,A)=x^2-A$. The result of Vinogradov on the distribution of $k$-power residues corresponds to the case $F(t,A)=x^k-A$ ($a$ is a $k$-power residue iff $x^k-a$). The results of Shparlinski cited above correspond to $F(t,\A)=t^n+A_1t^{n-1}+\ldots+A_n$ and $F(t,A_1,A_2)=t^n+A_1t+A_2$, however Theorem \ref{thm1} implies (\ref{shpa1}) with the weaker error term $O_n\lb p^{n-1/2}\log^n p\rb$ and (\ref{shpa2}) only under the stronger assumption $H_1H_2>p^{3/2+\eps}$. The stronger error terms in (\ref{shpa1}) and (\ref{shpa2}) were obtained using Burgess's estimates of character sums in short intervals and Deligne's estimates of the number of points on surfaces, however the special shape of the considered polynomials was exploited in the application of these methods and it is not clear how to obtain such an improvement in the general case.

{\bf Remark 2.} The condition $p>d$ in Theorem \ref{thm1} generally cannot be dropped. For example if we fix $p$ and take $F(t,A)=t^p-t-A$ then we have $G\cong\Z/p$ (by Artin-Schreier theory) and $\nu=1$, so Theorem \ref{thm1} would predict that the number of $a\in S\subset\F_q$ with $C(F(t,a))=\{1\}$ is $|S|/p+O_p(q^{-1/2}\irreg(S))$. However if we take $S=\{a\in\F_q|\tr_{\F_q/\F_p}(a)=0\}$ it turns out that $F(t,a)$ splits completely (i.e. $C(F(t,a))=\{1\}$) for all $a\in S$, while $\irreg(S)=p$ remains bounded, so the conclusion of Theorem \ref{thm1} does not hold in this case.

{\bf Acknowledgment.} The author would like to thank Ze\'ev Rudnick for a discussion which lead the author to the present work. The author would also like to thank Ze\'ev Rudnick and Igor Shparlinski for many valuable comments on earlier drafts of this paper.

\section{Applications}

Before proving our main result we will present a couple of interesting consequences. The first one concerns the distribution of irreducible polynomials in short intervals in $\F_p[t]$ of the form $\{f+a|a\in\F_p\}$. Kurlberg and Rosenzweig \cite{KuRo18_} have shown that if $f\in\F_p[t]$ is a Morse polynomial of degree $d$ (i.e. $f$ takes $d-1$ distinct values at the zeros of its derivative) and $h_1,\ldots,h_m\in\F_p$ are distinct then 
\begin{multline*}\abs{\{a\in\F_p|f+a+h_1,\ldots,f+a+h_m\mbox{ are irreducible}\}}=\\=\frac{p}{d^m}\lb 1+O_{d,m}\lb p^{-1/2})\rb\rb.\end{multline*}
We extend this result to even shorter intervals. We mention that the following corollary was recently obtained independently by Kurlberg and Rosenzweig.
\begin{cor} Let $f\in\F_p[t],\deg f=d$ be a Morse polynomials and $h_1,\ldots,h_m$ distinct elements. Let $I=\{\be,\ldots,\be+H-1\}\ss\F_p$ be an interval of length $H$. Then
\begin{multline*}\abs{\{a\in I|f+a+h_1,\ldots,f+a+h_k\mbox{ are irreducible}\}}=\\=\frac{H}{d^m}\lb 1+O_{d,m}\lb \frac{p^{1/2}\log p}{H}\rb\rb.\end{multline*}
\end{cor}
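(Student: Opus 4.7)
The plan is to apply Theorem~\ref{thm1} to the polynomial
$$F(t,A)=\prod_{i=1}^m\lb f(t)+A+h_i\rb\in\F_p[t,A]$$
in one parameter ($n=1$), taking $S=I$ and letting $C\ss S_{dm}$ be the conjugacy class of permutations acting as a single $d$-cycle on each of the $m$ blocks of size $d$ indexing the roots of the factors $f(t)+A+h_i$. Under this choice, for every $a\in\F_p$ with $\Disc_tF(t,a)\neq 0$ the event $C(F(t,a))=C$ coincides with the event that all of $f+a+h_1,\ldots,f+a+h_m$ are irreducible.

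First I would verify the hypotheses of Theorem~\ref{thm1}. Since $p\to\ity$ with $d,m$ fixed we may assume $p>dm=\deg_tF$, so the condition excluding factors in $\F_p[t^p,A]$ is automatic. That $\Disc_tF\neq 0$ follows because each factor $f(t)+A+h_i$ has nonzero $t$-discriminant (a nonzero polynomial in $A$, as $f$ is Morse), and because any two distinct factors are coprime in $\F_p(A)[t]$ since their difference $h_i-h_j$ is a nonzero constant.

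The main step, and the only nontrivial one, is to identify the Galois group $G$ of $F(t,A)$ over $\F_p(A)$ together with its constant-field extension $\F_{p^\nu}$. The substitution $A\mapsto A+h_i$ shows that each individual factor $f(t)+A+h_i$ has the same Galois group over $\F_p(A)$ as $f(t)+A$, which equals $S_d$ by the Morse hypothesis, and whose splitting field contains no nontrivial constant subextension of $\F_p$. The key claim is that the $m$ splitting fields associated with the distinct shifts $h_i$ are linearly disjoint over $\F_p(A)$, so that $G\cong S_d^m$ in its natural action on the $dm$ roots and $\nu=1$. This linear disjointness is precisely the Galois-theoretic input used by Kurlberg--Rosenzweig \cite{KuRo18_} for the Morse case, and I would simply invoke it. With $\nu=1$ the map $\pi$ lands in the trivial group so $\pi^{-1}(\Fr_p)=G$, whence
$$\frac{\nu\card{C\cap\pi^{-1}(\Fr_p)}}{|G|}=\frac{|C|}{|G|}=\frac{((d-1)!)^m}{(d!)^m}=\frac{1}{d^m}.$$

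Finally I would feed these data into Theorem~\ref{thm1} and use the bound $\irreg(I)\le 9p\log p/H$ from (\ref{irregint}), producing the claimed asymptotic with relative error $O_{d,m}(p^{1/2}\log p/H)$. The Galois computation in the previous paragraph is the only genuine obstacle; everything else is a routine verification or a direct application of the main theorem.
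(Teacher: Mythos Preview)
Your proposal is correct and follows essentially the same route as the paper: define $F(t,A)=\prod_{i=1}^m(f(t)+A+h_i)$, identify the Galois group as $S_d^m$ with $\nu=1$, compute $|C\cap G|/|G|=1/d^m$, and apply Theorem~\ref{thm1} together with (\ref{irregint}). The only cosmetic difference is that the paper cites \cite{GrKu08}*{Proposition 18} rather than \cite{KuRo18_} for the Galois computation, and is terser in verifying the hypotheses.
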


\begin{proof} Consider $F(t,A)=\prod_{i=1}^m(f(t)+h_i+A)$. For $a\in\F_p$ such that $(\Disc_t(F))(a)\neq 0$ the polynomials $f+a+h_i$ are irreducible iff $C(F(t,a))$ has cycle structure $(d,d,\ldots,d)$. It is shown in \cite{GrKu08}*{Proposition 18} that for $d,m$ fixed and $p$ large enough the Galois group of $F(t,A)$ over $\F_p(A)$ is $G\cong S_d^m$. If $C$ is the class of products of $d$-cycles in $G$ then
$|C|=\frac{1}{d^m}|G|$ and now the corollary follows from Theorem \ref{thm1} applied to $F(t,A)$ and the set $S=I$ and (\ref{irregint}).\end{proof}

Our next application is to the distribution of irreducible values of a bivariate polynomial $F(t,x)\in\F_q[t][x]$ ($q$ is a power of a prime $p$) when we substitute for $x$ a polynomial $f=a_0+\ldots+a_nt^n$ with coefficients $(a_0,\ldots,a_n)$ taken in a subset $S\ss\F_q^{n+1}$. The case $S=\F_q^{n+1}$ was treated in \cite{Ent18_}*{Corollary 1.1}, it is the function field analogue of the classical Bateman-Horn conjecture in the $q\to\ity$ limit.

\begin{cor} Let $F_1,\ldots,F_m\in\F_q[t,x]\sm\F_q[t,x^p]$ be absolutely irreducible polynomials and denote $d_i=\deg_t F_i\lb t,A_0+A_1t+\ldots+A_nt^n\rb$ ($A_0,\ldots,A_n$ are free variables). Let $n$ be a natural number such that one of the following holds:
\begin{enumerate}
\item $n\ge 3$.
\item $n\ge 2$ and $p>2$.
\item $n\ge 1$ and $p>\max d_i$.
\end{enumerate}
Let $S\ss\{f\in\F_q[t]|\deg f\le n\}$ be a subset. Then
\begin{multline*}\abs{\{f\in S|F_i(t,f)\mbox{ are irreducible for }1\le i\le m\}}=\\=
\frac{|S|}{\prod_{i=1}^m d_i}
\lb 1+O_{m,n,d_i}\lb q^{-1/2}\cdot\irreg(S)\rb\rb\end{multline*}
(we define $\irreg(S)$ be viewing $S$ as a subset of $\F_q^{n+1}$).
\end{cor}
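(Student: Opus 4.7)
The plan is to apply Theorem \ref{thm1} to the $(n+1)$-variable polynomial
\[\tilde F(t,\A):=\prod_{i=1}^m F_i\lb t,\,A_0+A_1t+\ldots+A_nt^n\rb,\qquad \A=(A_0,\ldots,A_n),\]
so that the condition ``$F_i(t,f_\a)$ irreducible for all $i$'' on a coefficient vector $\a=(a_0,\ldots,a_n)\in S$ (where $f_\a=a_0+\ldots+a_nt^n$) translates into a single condition on the factorization class of $\tilde F(t,\a)$. First I would verify the hypotheses of Theorem \ref{thm1}: under any of the conditions (1)--(3), each factor $F_i(t,A_0+A_1t+\ldots+A_nt^n)$ is absolutely irreducible in $\F_q[t,\A]$ and has no irreducible factor in $\F_q[t^p,\A]$, and the (assumed distinct) $F_i$ yield pairwise nonassociate factors of $\tilde F$, whence $\tilde F$ is squarefree with $\Disc_t\tilde F\neq 0$. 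These properties, together with the Galois-theoretic input needed below, are precisely what \cite{Ent18_}*{Corollary 1.1} establishes in the course of handling the case $S=\F_q^{n+1}$.

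Second, the same work shows that under any of (1)--(3) the Galois group of each $F_i(t,A_0+\ldots+A_nt^n)$ over $\F_q(\A)$ is the full symmetric group $S_{d_i}$, that the splitting fields for different $i$ are linearly disjoint over $\F_q(\A)$, and that the algebraic closure of $\F_q$ in the joint splitting field equals $\F_q$ itself. Hence $G=\prod_{i=1}^m S_{d_i}$, $\nu=1$, and $\pi^{-1}(\Fr_q)=G$. For any $\a$ with $(\Disc_t\tilde F)(\a)\neq 0$ the polynomials $F_i(t,f_\a)$ are all irreducible if and only if the factorization class $C(\tilde F(t,\a))$ equals the conjugacy class $C\ss G$ of tuples $(\sigma_1,\ldots,\sigma_m)$ with each $\sigma_i$ a $d_i$-cycle in $S_{d_i}$; since a random element of $S_{d_i}$ is a $d_i$-cycle with probability $1/d_i$, we get $|C|/|G|=\prod_{i=1}^m 1/d_i$.

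Applying Theorem \ref{thm1} to $\tilde F$ and $S\ss\F_q^{n+1}$ then yields
\[\card{\{\a\in S:C(\tilde F(t,\a))=C\}}=\frac{|S|}{\prod_{i=1}^m d_i}\lb 1+O_{m,n,d_i}\lb q^{-1/2}\irreg(S)\rb\rb.\]
The count appearing in the corollary differs from this by at most $\#\{\a\in S:(\Disc_t\tilde F)(\a)=0\}=O_{m,n,d_i}(q^n)$, and this discrepancy is absorbed into the error term: by Fourier inversion applied at any point of $S$ one has $\sum_\b|\hat{\one_S}(\b)|\ge 1$, so $|S|\cdot\irreg(S)=q^{n+1}\sum_\b|\hat{\one_S}(\b)|\ge q^{n+1}$ and the absolute error term in Theorem \ref{thm1} is of order at least $q^{n+1/2}$, which dominates $q^n$. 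The only nontrivial ingredient beyond Theorem \ref{thm1} is thus the Galois computation from \cite{Ent18_}; the reduction to Theorem \ref{thm1} is otherwise formal and parallels the reduction used for the preceding two corollaries.
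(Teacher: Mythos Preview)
Your proposal is correct and follows essentially the same route as the paper: define the product polynomial $\tilde F(t,\A)=\prod_i F_i(t,A_0+\ldots+A_nt^n)$, invoke \cite{Ent18_} to get $G=\prod_i S_{d_i}$ (with $\nu=1$), identify the relevant class as the one where each factor is a full cycle so that $|C\cap G|/|G|=\prod_i 1/d_i$, and apply Theorem~\ref{thm1}. You are somewhat more careful than the paper in explicitly checking the hypotheses of Theorem~\ref{thm1} and in bounding the contribution from the discriminant locus, but the argument is the same.
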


\begin{proof} Consider the polynomial
$$F(t,\A)=\prod_{i=1}^mF_i\lb t,A_0+A_1t+\ldots+A_nt^n\rb.$$ We have $d=\deg_tF=\sum_{i=1}^md_i$. It was shown in \cite{Ent18_} that under the conditions of the corollary the Galois group of $F$ is
$G=\prod_{i=1}^mS_{d_i}\ss S_d$. For $f=\sum_{i=0}^na_it^i$ we have that $F_i(t,f)$ are all irreducible iff $F(t,a_0,\ldots,a_n)$ factors into $m$ irreducible polynomials of degrees $d_1,\ldots,d_m$. Now the corollary follows from Theorem \ref{thm1} and the fact that the number of elements in $G$ with cycle structure $(d_1,\ldots,d_m)$ is $|G|/\prod_{i=1}^m d_i$.
\end{proof}

\section{Preliminaries}

Throughout this section $p$ is a prime, $q=p^k$ is a power of $p$ and $\F_q$ is the field with $q$ elements. We will denote by
\begin{equation}\label{sac}\psi(u)=e^{2\pi i\frac{\tr_{\F_q/\F_p}(u)}{p}}\end{equation} the standard additive character on $\F_q$.
\subsection{Additive character sums on curves}

Let $W/\F_q$ an absolutely irreducible projective algebraic curve. Let $f\in\F_q(W)$ be a rational function on $W$ defined over $\F_q$.

\begin{prop}\label{charsum}Assume that $(\deg f,p)=1$ and let $0\neq b\in\F_q$ be a nonzero element. Consider the additive character sum
$$S(f;b)=\sum_{x\in W(\F_q)\sm\{\mbox{poles of }f\}}\psi(bf(x)).$$
Then we have the estimate
$$|S(f;b)|\le\lb 2g(W)-2+2\deg f\rb q^{1/2},$$
where $g(W)$ is the genus of $W$.
\end{prop}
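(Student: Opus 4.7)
The plan is to realize $S(f;b)$ as a trace of Frobenius on an Artin--Schreier cover of $W$ and apply the Weil bound (the Riemann hypothesis for curves over finite fields).

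First I reduce to the case where every pole of $F:=bf$ has order coprime to $p$. Since $\tr_{\F_q/\F_p}(u^p-u)=0$ for every $u\in\F_q$, for any $h\in\F_q(W)$ and any $x\in W(\F_q)$ not a pole of $h$ we have $\psi((h^p-h)(x))=1$; hence $F$ may be replaced by $F-(h^p-h)$ without changing $S(f;b)$. By iterating this at each pole of $F$ whose order is divisible by $p$ (choosing $h$ to cancel the leading polar term, which is possible because $\F_q$ is perfect), I reduce to the case that every pole of $F$ has order coprime to $p$ and $\deg F\le\deg f$. The hypothesis $(\deg f,p)=1$ ensures that after this reduction $F$ is not of the form $g^p-g+c$ for any $g\in\fqb(W),\,c\in\fqb$ (since every pole order of such an expression would be divisible by $p$), so the associated Artin--Schreier cover will be geometrically irreducible.

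Next I form the Artin--Schreier cover $\phi:\tilde W\to W$ defined by $y^p-y=F$. It is geometrically irreducible of degree $p$, \'etale outside the $s$ poles of $F$, and totally wildly ramified at each pole $P$ of order $m_P$ with local different exponent $(p-1)(m_P+1)$. Riemann--Hurwitz then yields
$$2g(\tilde W)-2=p(2g(W)-2)+(p-1)(\deg F+s),$$
and a direct computation with the equation $y^p-y=F$ shows that for unramified $x\in W(\F_q)$ the Frobenius $\Fr_x\in\Gal(\tilde W/W)\cong\F_p$ corresponds to $\tr_{\F_q/\F_p}(F(x))\in\F_p$; thus for the character $\chi_t(a)=e^{2\pi ita/p}$ of $\F_p$ we have $\chi_t(\Fr_x)=\psi(tF(x))=\psi(tbf(x))$.

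The zeta function of $\tilde W$ factors according to the characters of $\Gal(\tilde W/W)$ as $Z(\tilde W,T)=Z(W,T)\prod_{t\in\F_p^*}L(\chi_t,T)$, where each $L(\chi_t,T)$ is a polynomial in $T$ whose reciprocal roots have absolute value $q^{1/2}$ by Weil's Riemann hypothesis. Matching degrees with the genus formula and using Galois symmetry $\deg L(\chi_t,T)=\deg L(\chi_1,T)$, each $L(\chi_t,T)$ has degree $2g(W)-2+\deg F+s$. The trace formula for $L(\chi_1,T)$ then gives $S(f;b)=-\sum_j\alpha_{1,j}$ with $|\alpha_{1,j}|=q^{1/2}$, whence
$$|S(f;b)|\le(2g(W)-2+\deg F+s)q^{1/2}\le(2g(W)-2+2\deg f)q^{1/2},$$
using $s\le\deg F\le\deg f$. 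The step that requires the most care is the reduction in the first paragraph; this is exactly where the hypothesis $(\deg f,p)=1$ is essential, since without it the Artin--Schreier cover can fail to be geometrically irreducible and the Weil bound does not directly apply.
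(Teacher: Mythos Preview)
Your argument is the standard Artin--Schreier/Weil proof that underlies the result the paper cites; the paper itself does not reprove the bound but simply invokes \cite{Mor91}*{Theorem 4.47(iii)}, noting only that the hypothesis $(\deg f,p)=1$ forces $y^p-y=bf$ to define a genuine degree-$p$ extension of $\fqb(W)$. So your approach and the paper's are the same at the level of ideas; you have unpacked what the citation contains.

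Two points in your write-up deserve tightening. First, in the reduction paragraph you assert that one may choose a global $h\in\F_q(W)$ whose $p$-th power cancels the leading polar term at a given pole of $F$ without introducing poles outside the pole set of $f$; this is true but requires Riemann--Roch or strong approximation, not merely the perfectness of $\F_q$ (which only handles the local leading coefficient). Second, and more substantively, your claimed identity $S(f;b)=-\sum_j\alpha_{1,j}$ is not literally correct if the reduction eliminates some pole of $f$ entirely (which happens when the local Artin--Schreier class is trivial at that pole): such points are unramified in the cover and contribute $\chi_1(\Fr_x)$ to the $L$-function trace but are excluded from $S(f;b)$. The discrepancy is at most $s-s'$ terms of absolute value $1$, where $s,s'$ are the number of geometric poles before and after reduction; since $\deg L(\chi_1,T)=2g(W)-2+\deg F'+s'$ and $\deg F'+s\le 2\deg f$, one still gets
\[
|S(f;b)|\le \deg L(\chi_1,T)\,q^{1/2}+(s-s')\le (2g(W)-2+\deg F'+s)\,q^{1/2}\le(2g(W)-2+2\deg f)\,q^{1/2},
\]
so the stated bound survives. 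With these two clarifications your proof is complete.
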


\begin{proof} The proposition follows from \cite{Mor91}*{Theorem 4.47(iii)}. The condition\\ $(\deg f,p)=1$ ensures that the extension of $\fqb(W)$ defined by the equation $y^p-y=f$ is a proper Artin-Schreier extension of degree $p$, so the conditions of \cite{Mor91}*{Theorem 4.47(iii)} are satisfied.\end{proof}

\subsection{Curve parametrizing specializations of a bivariate polynomial with prescribed factorization type}

Let $F(t,A)\in\F_q[t,A]$ be a squarefree polynomial in two variables without any irreducible factors lying in $\F_q[A,t^p]$ (equivalently $\Disc_t F\neq 0$) and let $d_1,\ldots,d_r$ be natural numbers with $\sum d_i=d=\deg_t F(t,A)$. We would like to determine for which $a\in\F_q$ the univariate polynomial $F(t,a)\in\F_q[t]$ decomposes into distinct irreducible polynomials of degrees $d_1,\ldots,d_r$. It turns out that these are parametrized by a (possibly reducible) affine algebraic curve defined over $\F_q$. The parametrization will not be one-to-one but with a fixed number of points on the curve corresponding to each $a$ as above.

First we briefly recall the notion and basic properties of Frobenius classes in Galois extensions of function fields, referring the reader to \cite{FrJa08}*{\S 6} or \cite{Ros02}*{\S 9} for a detailed exposition. Consider the extension field $L$ obtained by adjoining to $\F_q(A)$ the roots of $F(t,A)\in\F_q(A)[t]$. By our assumptions this extension is separable and therefore Galois. If $\F_{q^\nu}$ is the algebraic closure of $\F_q$ in $L$ then $L$ is the function field of a curve $W$ defined over $\F_{q^\nu}$. There is a surjective restriction map
\begin{equation}\label{galproj}G:=\Gal(L/\F_q(A))\twoheadrightarrow \Gal(\F_{q^\nu}/\F_q)\cong \Z/\nu.\end{equation} Consider the open subset $V\ss\A^1$ defined by 
$$V=\{a\in\fqb|\lb\Disc_t F\rb(a)\neq 0\}.$$
 A specialization $F(t,a)$ with $a\in\F_q$ is squarefree iff $a\in V(\F_q)$. With each $a\in V(\F_q)$ one can associate a conjugacy class in $G$ 
called the Frobenius class (also known as the Artin symbol) of $a$ in the extension $L/\F_q(A)$. We denote this class by $\Fr(a;L/\F_q(A))$. Any $\sig\in\Fr(a;L/\F_q(A))$ is mapped to the $q$-Frobenius $\Fr_q$ by the map in (\ref{galproj}). If we view $G$ as a subgroup of $S_d$ via its action on the roots of $F(t,A)\in\F_q(A)[t]$ then the cycle structure of $\Fr(a;L/\F_q(A))$ corresponds to the cycle structure of $\Fr_q$ acting on the roots of $F(t,a)$ in $\fqb$. Consequently $F(t,a)$ decomposes into distinct irreducible polynomials of degree $d_1,\ldots,d_r$ iff the cycle structure of $\Fr(a;L/\F_q(A))$ is $(d_1,\ldots,d_r)$. 

\begin{prop}\label{param1} Let $C$ be a conjugacy class of $G$ that is mapped to $\Fr_q\in\Gal(\F_{q^\nu}/\F_q)$ under the map in (\ref{galproj}). there exists an affine irreducible curve $W_C$ defined over $\F_q$ with a finite \'etale covering $W_C\to V$ defined over $\F_q$ such that for any point $a\in V(\F_q)$ the following are equivalent:
\begin{enumerate}
\item[(i)] $\Fr(a;L/\F_q(A))=C$.
\item[(ii)] There exists a point $z\in W_C(\F_q)$ lying over $a$.
\item[(iii)] There exist exactly $|G|/\nu|C|$ points in $W_C(\F_q)$ lying over $a$.
\end{enumerate}
Furthermore over $\F_{q^\nu}$ the curve $W_C$ becomes isomorphic to an open subset of the curve $W/\F_{q^\nu}$ with the function field $\F_{q^\nu}(W)=L$ defined above.
\end{prop}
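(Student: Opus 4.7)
I would prove this by a Galois-descent construction. Fix a representative $\sigma\in C$; since $C$ maps to $\Fr_q\in\Gal(\F_{q^\nu}/\F_q)$, $\sigma$ acts on $L$ as a $\Fr_q$-semilinear automorphism and hence induces a $\Fr_q$-semilinear $V$-automorphism of $W$. This supplies a descent datum for (an appropriate piece of) $W$ from $\F_{q^\nu}$ to $\F_q$; my candidate for $W_C$ is the $\F_q$-scheme obtained from this descent. Equivalently, letting $T$ denote the normalization of $V$ in $L$ (an $\F_q$-scheme whose base change to $\F_{q^\nu}$ decomposes into $\nu$ cyclically-permuted copies of $W$), $W_C$ can be realized as a suitable $\F_q$-subscheme of $T$ adapted to the class $C$.

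Given the construction, the étale property of $W_C\to V$ is inherited from that of $W\to V_{\F_{q^\nu}}$, which in turn follows from the hypothesis $\Disc_t F\ne 0$; the geometric irreducibility of $W_C$ follows from that of $W$, ensured by $\F_{q^\nu}$ being algebraically closed in $L$. For the equivalence of (i), (ii), (iii), I would analyze the geometric fiber of $W_C\to V$ over a point $a\in V(\F_q)$: its $\F_q$-points correspond to geometric points of $W$ above $a$ that are fixed by the $\sigma$-twisted Frobenius. Since the fiber of $T\to V$ over $a$ is a $G$-torsor on which the natural Frobenius acts through the class $\Fr(a;L/\F_q(A))$, such fixed points exist iff $\sigma$ is conjugate in $G$ to a representative of $\Fr(a)$, which is precisely the condition $\Fr(a;L/\F_q(A))=C$. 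When this holds, the set of such fixed points forms a torsor under $Z_G(\sigma)\cap H$ with $H=\Gal(L/\F_{q^\nu}(A))$, and therefore has size $|Z_G(\sigma)|/\nu=|G|/(\nu|C|)$, yielding (iii).

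The main obstacle is making the descent work in full generality. When $\sigma^\nu\ne 1$ in $G$, the naive cocycle condition for descending all of $W$ via $\sigma$ fails, and one must either restrict to a smaller open subscheme of $W$ on which the cocycle can be repaired, or replace $W$ by a suitable quotient of $T$ carrying a valid descent datum while still becoming isomorphic to an open subset of $W$ after base change to $\F_{q^\nu}$. This requires careful bookkeeping with the subgroups $\langle\sigma\rangle$ and $H$ inside $G$, in particular with the question of whether the extension $1\to H\to G\to G/H\to 1$ admits a section compatible with $\sigma$.
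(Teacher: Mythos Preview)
Your twist-and-descend plan is exactly the standard construction the paper is invoking; the paper itself offers no proof beyond calling it ``fairly standard'' and citing \cite{Ent18_}*{Theorem 3}, so you have already written more than it does. Your fiber analysis is correct, including $|Z_G(\sigma)\cap H|=|G|/(\nu|C|)$; one small addendum is that since $\pi(\sigma)$ generates $G/H$ we have $G=H\langle\sigma\rangle$, and hence every $G$-conjugate $g\sigma g^{-1}=h\sigma^k\sigma\sigma^{-k}h^{-1}=h\sigma h^{-1}$ is already an $H$-conjugate, which is what makes (i)$\Leftrightarrow$(ii) go through after restricting to a single geometric component.

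The obstacle you flag is genuine: when $\sigma^\nu\ne 1$ the naive $\Gal(\F_{q^\nu}/\F_q)$-cocycle fails, and indeed $W$ may admit no $\F_q$-form as a $V$-scheme at all (take $G\cong\Z/4$, $\nu=2$, where the unique index-$2$ subfield of $L$ over $\F_q(A)$ is $\F_{q^\nu}(A)$ itself). The clean fix is to descend not from $\F_{q^\nu}$ but from $\F_{q^m}$ with $m=\mathrm{ord}(\sigma)$: the twisted Frobenius $\sigma^{-1}\Phi_T$ preserves the component $\bar W_0\subset T_{\fqb}$, and its $m$-th iterate is $\sigma^{-m}\Phi_T^m=\Phi_T^m$, the canonical $q^m$-Frobenius for the $\F_{q^m}$-structure on $\bar W_0$ obtained by base-changing $W$. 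Weil's descent criterion then produces an $\F_q$-scheme $W_C$ with $(W_C)_{\F_{q^m}}\cong W_{\F_{q^m}}$. In particular $(W_C)_{\fqb}\cong W_{\fqb}$, so $g(W_C)=g(W)$ and $\deg(W_C\to V)=|H|$, which is all that Proposition~\ref{charsumdec} actually uses; the literal assertion that $(W_C)_{\F_{q^\nu}}$ is an open subset of $W$ over $\F_{q^\nu}$ need not hold when $\sigma^\nu\ne 1$ and should be read as a statement over $\fqb$.
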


\begin{proof} This is a fairly standard construction used most notably in one of the proofs of the function field version of the Chebotarev density theorem. A more general version of this construction applicable to any finite \'etale map of smooth varieties over $\F_q$ appears in the proof of \cite{Ent18_}*{Theorem 3}.\end{proof}

\begin{prop}\label{charsumdec} In the above setting assume additionally that $p>d$ (recall that $d=\sum d_i=\deg_t F$) and denote by $D(d_1,\ldots,d_r)$ the set of polynomials in $\F_q[t]$ that factor into distinct irreducible factors of degree $d_1,\ldots,d_r$. Let $0\neq b\in\F_q$ be a nonzero element. Then
$$\sum_{a\in\F_q\atop F(t,a)\in D(d_1,\ldots,d_r)}\psi(ba)=O_{\deg F}\lb q^{1/2}\rb,$$ here the implicit constant depends only on $\deg F$, the total degree of $F(t,A)$ in both variables and $\psi$ is the standard additive character defined in (\ref{sac}).
\end{prop}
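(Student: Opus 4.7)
My plan is to translate the character sum into sums over $\F_q$-rational points of the parametrizing curves $W_C$ supplied by Proposition \ref{param1}, then apply Proposition \ref{charsum} to each of them.

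Let $\mathcal{C}$ denote the set of conjugacy classes $C$ of $G$ with cycle structure $(d_1,\ldots,d_r)$ that are mapped to $\Fr_q$ by (\ref{galproj}). Since $F(t,a)\in D(d_1,\ldots,d_r)$ forces $F(t,a)$ to be squarefree of degree $d$, any contributing $a$ lies in $V(\F_q)$, and by the cycle-structure description of the Frobenius class the condition $F(t,a)\in D(d_1,\ldots,d_r)$ is equivalent to $\Fr(a;L/\F_q(A))\in\mathcal{C}$. Proposition \ref{param1}(iii) tells us that the covering $\pi_C:W_C\to V$ has fiber of size $|G|/(\nu|C|)$ over $a\in V(\F_q)$ when $\Fr(a)=C$ and empty fiber otherwise, hence
$$\sum_{a\in\F_q\atop F(t,a)\in D(d_1,\ldots,d_r)}\psi(ba)=\sum_{C\in\mathcal{C}}\frac{\nu|C|}{|G|}\sum_{z\in W_C(\F_q)}\psi(b\pi_C(z)).$$

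For each $C$, I would pass to the smooth projective completion $\overline{W_C}$, view $\pi_C$ as a rational function $\overline{W_C}\to\mathbb{P}^1$, and apply Proposition \ref{charsum}. Absolute irreducibility of $\overline{W_C}$ over $\F_q$ follows because $W_C\otimes\F_{q^\nu}$ is an open subset of the geometrically irreducible $W$. The degree of $\pi_C$ as a rational function equals the degree of the \'etale cover $W_C\to V$, which is $|G|/\nu$; since $|G|\le d!$ and $p>d$, this is coprime to $p$. The ramification locus of $\overline{W_C}\to\mathbb{P}^1$ lies above $(\A^1\sm V)\cup\{\infty\}$, whose size is $O_{\deg F}(1)$, so Riemann--Hurwitz bounds $g(\overline{W_C})$ by $O_{\deg F}(1)$. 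Proposition \ref{charsum} thus gives
$$\sum_{z\in\overline{W_C}(\F_q)\sm\{\mbox{poles of }\pi_C\}}\psi(b\pi_C(z))=O_{\deg F}(q^{1/2}),$$
and subtracting the $O_{\deg F}(1)$ contribution from points lying over $(\A^1\sm V)(\F_q)$ gives the same bound for the sum over $W_C(\F_q)$.

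Summing these estimates over the $O_{\deg F}(1)$ classes $C\in\mathcal{C}$ yields the proposition. The main technical point is controlling the geometry of the $W_C$ uniformly in $q$, but this is automatic since $\deg\pi_C$, the size of the branch locus, and hence $g(\overline{W_C})$, are all bounded in terms of $|G|\le d!$ and $\deg\Disc_tF$, i.e.\ in terms of $\deg F$.
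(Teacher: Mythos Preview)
Your proof is correct and follows essentially the same approach as the paper: decompose the sum over Frobenius classes with the right cycle structure, use Proposition~\ref{param1} to pass to the parametrizing curves $W_C$, and then apply Proposition~\ref{charsum} after checking that $(\deg\pi_C,p)=1$ and that the genus is $O_{\deg F}(1)$ via Riemann--Hurwitz. You are in fact slightly more careful than the paper in two places---explicitly restricting to classes mapping to $\Fr_q$ (which Proposition~\ref{param1} requires) and arguing for absolute irreducibility of $W_C$---but the underlying argument is the same.
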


\begin{proof} Let $C_1,\ldots,C_m$ be the conjugacy classes in $G=\Gal(L/\F_q(A))$ which when viewed as classes of permutations of the roots of $F(t,A)\in\F_q(A)[t]$ have cycle structure $(d_1,\ldots,d_r)$. Then $D(d_1,\ldots,d_r)$ is the disjoint union of the sets $$\{a\in V(\F_q)|\Fr(a;L/\F_q(A))=C_i\}.$$ 
For each $C_i$ consider the curve $W_{C_i}$ defined in Proposition \ref{param1} and denote by $f_i:W_{C_i}\to V$ the associated \'etale cover. Using Proposition \ref{param1} and Proposition \ref{charsum} we obtain
\begin{multline*}\sum_{a\in\F_q\atop F(a,t)\in D(d_1,\ldots,d_r)}\psi(ba)
=\sum_{i=1}^m\sum_{a\in V(\F_q)\atop{\Fr(a;L/\F_q(A))=C_i}}\psi(ba)=\\=\sum_{i=1}^m\frac{\nu|C_i|}{|G|}\sum_{z\in W_{C_i}\atop{f_i(z)\in V}}\psi(bf_i(z))=O(mS(f_i;b))=\\=O(m(2g(W_{C_i})-2+2\max\deg f_i)q^{1/2})=O_{\deg F}(q^{1/2}).\end{multline*}
Here we used the fact that $p>d$, so $(d!,p)=1$ and therefore $([L:\F_q(A)],p)=1$, so $(\deg f_i,p)=1$ for all $i$ since $\deg f_i|[L:\F_q(t)]$. We also used the estimates $g(W_{C_i})=O_{\deg F}(1)$ (follows from the Riemann-Hurwitz formula) and $$|\F_q\sm V(\F_q)|\le\deg_A\Disc_t F=O_{\deg F}(1).$$
\end{proof}

\subsection{Complexity of quasiprojective varieties and an explicit Chebotarev density theorem for varieties over a finite field}

Let $X$ be a quasiprojective variety defined over an algebraically closed field. We define the complexity of $X$ to be the minimal natural number $c$ such that $X$ is isomorphic to the locally closed subset of $\P^n$ defined by $$F_1(\mathbf{X})=\ldots=F_k(\mathbf{X})=0,G_1(\mathbf{X})\cdots G_m(\mathbf{X})\neq 0,$$ where $F_i,G_j$ are homogeneous polynomials in the variables $\mathbf{X}=(x_0,\ldots,x_n)$ and $n,\deg F_i,\deg G_j<c$. We denote the complexity of $X$ by $\comp(X)$. If $\phi:X\to Y$ is a morphism of quasiprojective varieties we define the complexity of $\phi$ to be the minimal natural number $c$ such that the graph of $\phi$ embedded in projective space via a Segre embedding of $X\times Y$ can be defined by equations and inequalities of degree at most $c$ in at most $c$ variables. See \cite{Ent18_}*{\S 4} and \cite{BGT11}*{Appendix A} for the basic properties of this notion of complexity.

Now let $\phi:X\to Y$ be a finite \'etale morphism of varieties defined over a finite field $\F_q$ with $Y$ normal and irreducible. Denote $K=\F_q(Y)$ (the field of rational functions on $Y$ defined over $\F_q$) and let $L$ be the Galois closure of the compositum of $\F_q(X_i)$ over $K$, where $X_i$ are the $\F_q$-irreducible components of $X$. The group $G=\Gal(L/K)$ is isomorphic to the \'etale monodromy group of the map $\phi$. With each $y\in Y(\F_q)$ one can associate a unique conjugacy class $\Fr(y;\phi)\ss G$ called the Frobenius class of $y$ with respect to the map $\phi$. This generalizes the notion of Frobenius class for a covering map of curves mentioned above and it is defined using the functoriality of the \'etale fundamental group. See \cite{Ent18_}*{\S 4} for its precise definition and basic properties.

There is an exact sequence
\begin{equation}\label{agmon}1\to\Gal(L\fqb/K\fqb)\to G\xrightarrow{\pi}\Gal(\F_{q^\nu}/\F_q)\to 1,\end{equation} where $\F_{q^\nu}$ is the algebraic closure of $\F_q$ in $L$ and for every $y\in Y(\F_q)$ we have $\pi(\Fr(y;\phi))=\Fr_q$.

Now we state the explicit Chebotarev density theorem for varieties over a finite field. There are several versions of this theorem and we cite the one appearing in \cite{Ent18_}. A very similar version appears in \cite{ABR15}*{Theorem A.4} (stated in the language of rings instead of varieties) and another slightly weaker version can be found in \cite{Cha97}*{Theorem 4.1}.

\begin{thm}[\cite{Ent18_}*{Theorem 3}]\label{cdt} Let $\phi:X\to Y$ and $G$ be as above and let $C$ be a conjugacy class of $G$ such that $\pi(C)=\{\Fr_q\}$. Then
$$\abs{\{y\in Y(\F_q)|\Fr(y;\phi)=C\}}=\nu\frac{|C|}{|G|}q^n\lb 1+O_{\comp(\phi)}(q^{-1/2})\rb.$$\end{thm}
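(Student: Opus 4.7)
The strategy is to deduce Theorem \ref{cdt} from the Lang--Weil bound for geometrically irreducible $\F_q$-varieties of bounded complexity, applied to twisted forms of the Galois closure of $\phi$.

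First, I would pass from $\phi:X\to Y$ to the Galois closure $\widetilde\phi:\widetilde Y\to Y$, obtained as the normalization of $Y$ in $L$. This is a finite \'etale $G$-cover over $\F_q$, and the complexity rules of \cite{Ent18_}*{\S 4} (see also \cite{BGT11}*{Appendix A}) give $\comp(\widetilde\phi)=O_{\comp(\phi)}(1)$. Since the Frobenius class $\Fr(y;\phi)$ agrees with $\Fr(y;\widetilde\phi)$, it suffices to count $y\in Y(\F_q)$ with Frobenius class equal to $C$ in $G$.

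Next, for each $\sigma\in G$ with $\pi(\sigma)=\Fr_q$, I would construct a twisted form $Y_\sigma$ as follows. Over $\F_{q^\nu}$ the scheme $\widetilde Y$ decomposes into $\nu$ geometrically irreducible components, each defined over $\F_{q^\nu}$. Fix one such component $W$; composing the natural $\F_{q^\nu}$-Frobenius of $W$ with the $\F_{q^\nu}$-automorphism of $W$ induced by $\sigma$ produces a Frobenius endomorphism, hence a descent of $W$ to an $\F_q$-form $Y_\sigma$. The condition $\pi(\sigma)=\Fr_q$ is exactly what makes this descent datum self-consistent, and it ensures that $Y_\sigma$ is geometrically irreducible of dimension $n=\dim Y$ with $\comp(Y_\sigma)=O_{\comp(\phi)}(1)$.

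Applying the Lang--Weil bound in its quantitative form for varieties of bounded complexity gives $|Y_\sigma(\F_q)|=q^n(1+O_{\comp(\phi)}(q^{-1/2}))$. A torsor computation on the fibers of $\widetilde\phi$ shows that for each $y\in Y(\F_q)$ with $\Fr(y;\phi)=C$, exactly $|Z_G(\sigma)|/\nu$ of the $\F_q$-points of $Y_\sigma$ sit over $y$, while no $\F_q$-points of $Y_\sigma$ sit over $y$ with $\Fr(y;\phi)\neq C$. Summing over $\sigma\in C$ and using $|C|\cdot|Z_G(\sigma)|=|G|$ yields
\[\sum_{\sigma\in C}|Y_\sigma(\F_q)|=\frac{|G|}{\nu}\bigl|\{y\in Y(\F_q):\Fr(y;\phi)=C\}\bigr|,\]
which combined with the Lang--Weil estimate gives the formula in the theorem. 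The main obstacle is the construction and analysis of $Y_\sigma$. Verifying geometric irreducibility is delicate because $\widetilde Y_{\fqb}$ itself has $\nu$ components, cyclically permuted both by geometric Frobenius and by any $\sigma$ with $\pi(\sigma)=\Fr_q$; one must check that these two permutations cancel so that the twist picks out one component $\F_q$-rationally. Controlling $\comp(Y_\sigma)$ reduces to tracking the effect on complexity of normalization, Galois closure and twisting, which is provided by the uniform complexity framework of \cite{Ent18_}*{\S 4}.
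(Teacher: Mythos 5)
This theorem is not proved in the paper at all: it is quoted as Theorem 3 of the reference by Entin (with the appendix of Andrade--Bary-Soroker--Rudnick as an alternative source), so there is no internal argument to compare yours against. Your sketch --- pass to the Galois closure, twist a geometric component by each $\sigma\in C$, use $\pi(\sigma)=\Fr_q$ to see that $\sigma^{-1}\circ\Fr_q$ stabilizes that component and hence defines a geometrically irreducible $\F_q$-form of controlled complexity, apply quantitative Lang--Weil, and divide out the fiber multiplicity $|Z_G(\sigma)|/\nu$ --- is exactly the standard proof given in those references, and your centralizer/torsor bookkeeping ($|C|\cdot|Z_G(\sigma)|=|G|$, with $\nu\mid|Z_G(\sigma)|$ because $\pi$ restricted to $Z_G(\sigma)$ is already surjective) is correct. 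The only wording to tighten is the description of the descent datum: the relevant endomorphism of $W$ is $\sigma^{-1}$ composed with the $q$-power Frobenius (not the $q^{\nu}$-power Frobenius of $W$ over $\F_{q^\nu}$), which is precisely the point you flag yourself when you note that the two permutations of the $\nu$ components must cancel.
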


\begin{cor}\label{corcheb}With notation as in the introduction let $F\in\F_q[t,\A]$ be a squarefree polynomial without irreducible factors in $\F_q(t^p,A)$ and let $G$ be the Galois group of $F$ over $\F_q(\A)$, which we view as a subgroup of $S_d$. Let $d_1,\ldots,d_r$ be natural numbers with $\sum d_i=d$ and $C\ss S_d$ the conjugacy class of permutations with cycle structure $(d_1,\ldots,d_r)$. Denote by $D(d_1,\ldots,d_r)$ the set of $\a\in\F_q^n$ such that $F(t,\a)\in\F_q[t]$ decomposes into distinct irreducible factors of degree $d_1,\ldots,d_r$. Then
$$\abs{D(d_1,\ldots,d_r)}=\nu\frac{|C\cap\pi^{-1}(\Fr_q)|}{|G|}q^n\lb 1+O_{n,\deg F}(q^{-1/2})\rb,$$
where $\F_{q^\nu}$ is the algebraic closure of $\F_q$ in the extension of $\F_q(\A)$ obtained by adjoining the roots of $F(t,\A)\in\F_q(\A)[t]$ and $\pi:G\to\Gal(\F_{q^\nu}/\F_q)$ is the restriction map.
\end{cor}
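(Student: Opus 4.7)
The plan is to deduce the corollary as a direct instance of Theorem~\ref{cdt} applied to an \'etale cover that parametrizes roots of $F$.

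First I would let $Y\ss\mathbb{A}^n$ denote the open subvariety $\{\a:(\Disc_tF)(\a)\neq 0\}$, so that for $\a\in Y(\F_q)$ the specialization $F(t,\a)$ is squarefree of degree $d$. Since $\Disc_tF$ has total degree $O_{\deg F}(1)$, the Schwartz--Zippel bound gives $|\F_q^n\sm Y(\F_q)|=O_{n,\deg F}(q^{n-1})$, which is absorbed into the claimed error term $O_{n,\deg F}(q^{n-1/2})$. The variety $Y$ is smooth as an open subset of $\mathbb{A}^n$ and irreducible, hence normal, so it meets the hypothesis of Theorem~\ref{cdt}.

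Next I would take the \emph{root cover}
$$X=\{(\a,\alpha)\in Y\times\mathbb{A}^1:F(\alpha,\a)=0\},$$
with $\phi:X\to Y$ the projection. Because $\Disc_tF$ is nonvanishing on $Y$, $\phi$ is finite \'etale of degree $d$. The Galois closure of the compositum over $\F_q(\A)$ of the function fields of the $\F_q$-irreducible components of $X$ is precisely the splitting field of $F(t,\A)$, so the monodromy group of $\phi$ in the sense of the paragraph preceding Theorem~\ref{cdt} is exactly $G$ with its standard permutation action on the $d$ roots. For $\a\in Y(\F_q)$ the Frobenius class $\Fr(\a;\phi)\ss G$, viewed inside $S_d$, has cycle structure equal to the factorization type of $F(t,\a)$. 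Moreover $X$, $Y$ and $\phi$ are cut out by polynomials of total degree bounded in terms of $\deg F$ in $n+1$ variables, so $\comp(\phi)=O_{n,\deg F}(1)$.

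Finally I would partition $C\cap G$ into its $G$-conjugacy classes $C_1,\ldots,C_m$. By the exact sequence~(\ref{agmon}) each $C_i$ with $\pi(C_i)\neq\{\Fr_q\}$ contributes no elements to $D(d_1,\ldots,d_r)\cap Y(\F_q)$, while for every remaining $C_i$ (those with $\pi(C_i)=\{\Fr_q\}$) Theorem~\ref{cdt} gives
$$\card{\{\a\in Y(\F_q):\Fr(\a;\phi)=C_i\}}=\nu\frac{|C_i|}{|G|}q^n\lb 1+O_{n,\deg F}(q^{-1/2})\rb.$$
Summing over these $C_i$, the numerators add up to $|C\cap\pi^{-1}(\Fr_q)|$, and combining with the $O_{n,\deg F}(q^{n-1})$ discrepancy from $\F_q^n\sm Y(\F_q)$ produces the stated formula. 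I do not expect a substantive obstacle; the only bookkeeping items are the identification of the monodromy of $\phi$ with $G$ and the bound $\comp(\phi)=O_{n,\deg F}(1)$, both of which are standard and spelled out in \cite{Ent18_}.
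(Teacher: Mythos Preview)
Your proposal is correct and follows essentially the same route as the paper: define $Y=\{\Disc_tF\neq 0\}\ss\A^n$, take the root cover $X=\{(\a,\tau):F(\tau,\a)=0\}\to Y$, identify its monodromy with $G\ss S_d$, decompose $C\cap\pi^{-1}(\Fr_q)$ into $G$-conjugacy classes, apply Theorem~\ref{cdt} to each, and bound $\comp(\phi)$ in terms of $n,\deg F$. Your version is in fact slightly more explicit about two bookkeeping points the paper leaves implicit (the $O_{n,\deg F}(q^{n-1})$ bound on $\F_q^n\sm Y(\F_q)$ and the observation that $G$-classes with $\pi(C_i)\neq\{\Fr_q\}$ contribute nothing), but there is no substantive difference.
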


\begin{proof} Let $Y$ be the open subset in the affine $n$-space defined by $(\Disc_t F)(\A)\neq 0$ (by our assumption on $F$ we have $\Disc_t F\neq 0$, so this open set is nonempty). Consider the variety
$X=\{(\a,\tau)\ss Y\times\A^1: F(\tau,\a)=0\}$ and the projection map $\phi(\a,\tau)=\a$. The variety $Y$ is smooth and irreducible and $\phi:X\to Y$ is finite \'etale of degree $d$ (since $(\Disc_t F)(\a)\neq 0$ for all $\a\in Y$). Therefore the conditions of Theorem \ref{cdt} hold with $G$ and $\nu$ as in the statement of the corollary. 

Recalling that $C\ss S_d$ is the conjugacy class of permutations with cycle structure $d_1,\ldots,d_r$ we have $\a\in D(d_1,\ldots,d_r)$ iff $\Fr(a;\phi)\ss C$. Now writing $C\cap\pi^{-1}(\Fr_q)$ as the disjoint union of $G$-conjugacy classes $C_1,\ldots,C_m$ and applying Theorem \ref{cdt} we obtain
$$\abs{D(d_1,\ldots,d_r)}=\nu\frac{|C\cap\pi^{-1}(\Fr_q)|}{|G|}q^n\lb 1+O_{\comp(\phi)}(q^{-1/2})\rb.$$
It remains to note that $\comp(\phi)=O_{n,\deg F}(1)$, since the affine part of the graph of $\phi$ is $$\{(\a,\tau,\a):F(\tau,\a)=0,(\Disc_t F)(\a)\neq 0\}.$$
\end{proof}

\section{Proof of Theorem \ref{thm1}}

Let $F\in\F_q[t,\A]$ be a squarefree polynomial without irreducible factors lying in $\F_q[t^p,\A]$, or equivalently $\Disc_t F\neq 0$. We will denote $\Del(\A)=\Disc_t F$. Denote $d=\deg_t F$ and let $d_1,\ldots,d_r$ be natural numbers with $\sum d_i=d$. We denote by $D(d_1,\ldots,d_r)$ the set of $\a\in\F_q^n$ such that $F(t,\a)$ factors into $r$ distinct irreducible factors of degree $d_1,\ldots,d_r$. Let $S\ss\F_q^n$ be a subset. Note that
$$\bigcup_{d_1,\ldots,d_r\atop{\sum d_i=d}}D(d_1,\ldots,d_r)=\{\a\in\F_q^n|\Del(\a)\neq 0\}$$
($F(t,\a)$ is squarefree iff $\Del(\a)\neq 0$).
We would like to estimate the size of $S\cap D(d_1,\ldots,d_r)$.

\begin{lem}\label{lem1} $$\abs{S\cap D(d_1,\ldots,d_r)}=\frac{|S|\cdot|D(d_1,\ldots,d_r)|}{q^n}+
q^n\sum_{0\neq \b\in\F_q^n}\hat{\one_S}(\b)\sum_{\a\in D(d_1,\ldots,d_r)}\psi(-\a\cdot\b).$$\end{lem}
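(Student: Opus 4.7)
The statement is a purely formal Fourier-analytic identity on $\F_q^n$; no arithmetic input is required at this stage. The plan is to apply discrete Fourier inversion to the indicator function $\one_S$, isolate the trivial frequency, and read off the main and error contributions.

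Concretely, I begin by writing
$$|S\cap D(d_1,\ldots,d_r)|=\sum_{\a\in D(d_1,\ldots,d_r)}\one_S(\a),$$
and substituting the Fourier expansion of $\one_S(\a)$ in terms of its Fourier coefficients $\hat{\one_S}(\b)$. The inversion formula, which expresses $\one_S(\a)$ as a weighted sum of additive characters $\psi(\pm\a\cdot\b)$ over $\b\in\F_q^n$, follows at once from the definition (\ref{fourier}) together with the standard orthogonality relation for additive characters on $\F_q^n$, namely $\sum_{\b\in\F_q^n}\psi(\b\cdot\mathbf{c})$ equals $q^n$ if $\mathbf{c}=0$ and vanishes otherwise.

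Swapping the order of summation turns the expression into a double sum whose inner piece is a character sum $\sum_{\a\in D(d_1,\ldots,d_r)}\psi(\pm\a\cdot\b)$ over the prescribed factorization type. I then single out the contribution of $\b=0$: the Fourier coefficient $\hat{\one_S}(0)$ equals $|S|/q^n$, while the inner sum collapses to $|D(d_1,\ldots,d_r)|$, yielding the main term $|S|\cdot|D(d_1,\ldots,d_r)|/q^n$. The remaining frequencies $\b\neq 0$ assemble into the error sum appearing on the right-hand side of the lemma, after matching the normalization of (\ref{fourier}).

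There is no substantive obstacle at this stage; the identity is bookkeeping. The real work will come afterwards, in bounding the character sums $\sum_{\a\in D(d_1,\ldots,d_r)}\psi(\a\cdot\b)$ for $\b\neq 0$: I anticipate fixing $n-1$ of the coordinates of $\a$ and applying Proposition \ref{charsumdec} in the remaining variable (using $p>d$ to ensure the Artin--Schreier hypothesis), then combining the resulting square-root cancellation estimate with the definition (\ref{irreg}) of $\irreg(S)$ to deduce the error term of Theorem \ref{thm1}.
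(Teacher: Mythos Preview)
Your proposal is correct and essentially the same as the paper's argument. The paper invokes Plancherel's identity to write $\sum_{\a}\one_S(\a)\one_{D}(\a)=q^n\sum_{\b}\hat{\one_S}(\b)\hat{\one_{D}}(\b)$ and then separates the $\b=0$ term, whereas you substitute the Fourier inversion formula for $\one_S$ and swap sums; since Plancherel is precisely Fourier inversion plus orthogonality, the two routes are the same computation organized slightly differently.
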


\begin{proof} By Plancherel's identity and (\ref{fourier}) we have
\begin{multline*}\abs{S\cap D(d_1,\ldots,d_r)}=\sum_{\a\in\F_q^n}\one_S(\a)\one_{D(d_1,\ldots,d_r)}(\a)=\\=q^n\sum_{\b\in\F_q^n}\hat{\one_S}(\b)\hat{\one_{D(d_1,\ldots,d_r)}}(\b)=\\
=\frac{|S|\cdot |D(d_1,\ldots,d_r)|}{q^n}+
\sum_{0\neq \b\in\F_q^n}\hat{\one_S}(\b)\sum_{\a\in D(d_1,\ldots,d_r)}\psi(-\a\cdot \b),\end{multline*}
here we also used the fact that $$\hat{\one_S}(0)=q^{-n}|S|,\hat{\one_{D(d_1,\ldots,d_r)}}(0)=q^{-n}|D(d_1,\ldots,d_r)|.$$
\end{proof}

\begin{prop}\label{propkey}For $0\neq\b\in\F_q^n$ we have
$$\sum_{\a\in D(d_1,\ldots,d_r)}\psi(-\a\cdot\b)=O_{\deg F}\lb q^{n-1/2}\rb.$$\end{prop}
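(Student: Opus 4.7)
The plan is to reduce the multivariate sum to the bivariate sum of Proposition \ref{charsumdec} by fibering over $n-1$ of the coordinates. Since $\b \neq 0$, after permuting the variables $A_1, \ldots, A_n$ (which affects none of the quantities in the statement) I may assume $b_n \neq 0$. Writing $\a = (\a', a_n)$ and $\b = (\b', b_n)$ with $\a', \b' \in \F_q^{n-1}$, and setting $F_{\a'}(t, s) := F(t, \a', s) \in \F_q[t, s]$, the sum splits as
$$\sum_{\a \in D(d_1, \ldots, d_r)} \psi(-\a \cdot \b) = \sum_{\a' \in \F_q^{n-1}} \psi(-\a' \cdot \b') \, T(\a'), \qquad T(\a') := \sum_{s \in \F_q \atop F_{\a'}(t, s) \in D(d_1, \ldots, d_r)} \psi(-b_n s).$$
Each $T(\a')$ is exactly the kind of character sum estimated by Proposition \ref{charsumdec}, with $F_{\a'}$ playing the role of $F$ and $-b_n \neq 0$ that of $b$.

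Call $\a'$ \emph{generic} when $\deg_t F_{\a'} = d$ and $\Disc_t F_{\a'}(s) \not\equiv 0$ in $\F_q[s]$. For such $\a'$ the polynomial $F_{\a'}$ satisfies the hypotheses of Proposition \ref{charsumdec} (applying, for $F_{\a'}$, the same equivalence between the discriminant condition and absence of inseparable factors that was used for $F$ itself); since $\deg F_{\a'} \leq \deg F$ and $p > d = \deg_t F_{\a'}$, that proposition gives $T(\a') = O_{\deg F}(q^{1/2})$. For non-generic $\a'$, I claim $T(\a') = 0$: if $\deg_t F_{\a'} < d$, every specialization $F_{\a'}(t, s_0)$ has $t$-degree strictly less than $d = \sum d_i$ and so cannot lie in $D(d_1, \ldots, d_r)$; if $\deg_t F_{\a'} = d$ but $\Disc_t F_{\a'}(s) \equiv 0$, then $\Disc_t F_{\a'}(s_0) = 0$ for every $s_0 \in \F_q$, so $F_{\a'}(t, s_0)$ is never squarefree of degree $d$, and again no $s_0$ contributes.

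Combining the two cases and bounding the outer sum by the triangle inequality (so the oscillation of $\psi(-\a' \cdot \b')$ plays no role) yields
$$\left| \sum_{\a \in D(d_1, \ldots, d_r)} \psi(-\a \cdot \b) \right| \leq \sum_{\a' \in \F_q^{n-1}} |T(\a')| \leq q^{n-1} \cdot O_{\deg F}(q^{1/2}) = O_{\deg F}(q^{n-1/2}),$$
which is the required bound. The main substantive point is the passage to the bivariate situation: one must verify that on generic fibers $F_{\a'}$ genuinely meets the hypotheses of Proposition \ref{charsumdec}, and check that the two possible degeneracies (degree drop in $t$ or identically vanishing discriminant) force $T(\a') = 0$ rather than spoiling the estimate.
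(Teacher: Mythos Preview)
Your proof is correct and follows essentially the same approach as the paper: fiber over all but one coordinate (you single out $A_n$ where the paper singles out $A_1$, an immaterial difference), observe that on degenerate fibers the inner sum is empty, apply Proposition~\ref{charsumdec} on the remaining fibers, and conclude by the triangle inequality over $\F_q^{n-1}$.
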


\begin{proof} Write $\b=(b_1,\ldots,b_n)$. Assume without loss of generality that $b_1\neq 0$.
Let $a_2,\ldots,a_n\in\F_q$ be any elements. If $\Del(A,a_2,\ldots,a_n)=0$ ($A$ is a variable) or \\ $\deg_tF(t,A,a_2,\ldots,a_n)<d$ then for any $a_1\in\F_q$ we have $(a_1,a_2,\ldots,a_n)\not\in D(d_1,\ldots,d_r)$.
If $\Del(A,a_2,\ldots,a_n)\neq 0$ and $\deg_tF(t,A,a_2,\ldots,a_n)=d$ then we may apply Proposition \ref{charsumdec} to the polynomial $F(t,A,a_2,\ldots,a_n)\in\F_q(t,A)$ and obtain
$$\sum_{a_1\in\F_q\atop{(a_1,a_2,\ldots,a_n)\in D(d_1,\ldots,d_r)}}\psi(-a_1b_1)=O_{\deg F}\lb q^{1/2}\rb.$$ Therefore
\begin{multline*}\sum_{\a\in D(d_1,\ldots,d_r)}\psi(-\a\cdot\b)=\\=
\sum_{(a_2,\ldots,a_n)\in\F_q^{n-1}}\psi(-a_2b_2-\ldots-a_nb_n)\sum_{a_1\in\F_q\atop{(a_1,\ldots,a_n)\in D(d_1,\ldots,d_r)}}\psi(-a_1b_1)=\\=O_{\deg F}\lb q^{n-1/2}\rb.\end{multline*}
\end{proof}

Now we can complete the proof of Theorem \ref{thm1}. By the definition of $\irreg(S)$ we have
$$\sum_{\b\in\F_q^n}\abs{\hat{\one_S}(\b)}=\frac{|S|}{q^n}\irreg(S)$$ and so
by Proposition \ref{propkey} we have
\begin{multline*}\abs{\sum_{0\neq \b\in\F_q^n}\hat{\one_S}(\b)\sum_{\a\in D(d_1,\ldots,d_r)}\psi(-\a\cdot\b)}\le\sum_{0\neq\b\in\F_q^n}\abs{\hat{\one_S}(b)}\cdot O_{\deg F}\lb q^{n-1/2}\rb=\\
=O_{\deg F}\lb q^{-1/2}|S|\cdot\irreg(S)\rb\end{multline*} and now Theorem \ref{thm1} immediately follows from
Lemma \ref{lem1} and Corollary \ref{corcheb}.

\section{Appendix: the irregularity of a product of intervals}

In this appendix we derive the bound (\ref{irregint}) for the irregularity of a product $S=I_1\times\ldots\times I_n\ss\F_p^n$ ($p$ is a prime) where $I_1,\ldots,I_n\ss\F_p$ are intervals or more generally arithmetic progressions, i.e. $$I_i=\{\al_ik+\be_i: k=0,1,\ldots,H_i-1\}$$ with $\al_i,\be_i\in\F_p, \al_i\neq 0$ and $1\le H_i\le p$.

\begin{lem}\label{lema1} For any subsets $S_i\ss\F_p$ and $S=S_1\times\ldots\times S_n\ss\F_p^n$ we have $$\irreg(S_1\times\ldots\times S_n)=\prod_{i=1}^n\irreg(S_i).$$\end{lem}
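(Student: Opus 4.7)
The plan is to exploit the fact that both the indicator function and the additive character factor across the product, so the Fourier transform of $\one_S$ is a tensor product of the Fourier transforms of $\one_{S_i}$, after which the sum of absolute values in the definition of irregularity factors as a product of sums.

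First I would note that for $S = S_1 \times \cdots \times S_n$, the indicator function factorizes as $\one_S(\a) = \prod_{i=1}^n \one_{S_i}(a_i)$, and the standard additive character on $\F_p^n$ factorizes as $\psi(\a\cdot\b) = \prod_{i=1}^n \psi(a_i b_i)$ (here $\psi$ is the standard additive character on $\F_p$). Plugging into the definition (\ref{fourier}) and separating the sum over $\a = (a_1,\ldots,a_n) \in \F_p^n$ into $n$ independent sums over $a_i \in \F_p$, one obtains
\[
\hat{\one_S}(\b) = \prod_{i=1}^n \hat{\one_{S_i}}(b_i).
\]

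Next I would sum absolute values and again use factorization: since $|\hat{\one_S}(\b)| = \prod_{i=1}^n |\hat{\one_{S_i}}(b_i)|$, a standard interchange of sum and product gives
\[
\sum_{\b \in \F_p^n} |\hat{\one_S}(\b)| = \prod_{i=1}^n \sum_{b_i \in \F_p} |\hat{\one_{S_i}}(b_i)|.
\]
Combining this with $|S| = \prod_{i=1}^n |S_i|$ and the definition (\ref{irreg}), I obtain
\[
\irreg(S) = \frac{p^n}{\prod_i |S_i|} \prod_{i=1}^n \sum_{b_i \in \F_p} |\hat{\one_{S_i}}(b_i)| = \prod_{i=1}^n \frac{p}{|S_i|} \sum_{b_i \in \F_p} |\hat{\one_{S_i}}(b_i)| = \prod_{i=1}^n \irreg(S_i),
\]
as desired.

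There is no real obstacle here; the proof is a routine application of the fact that the Fourier transform turns tensor products into tensor products. The only point worth checking carefully is the factorization of $\psi$, which follows from the fact that the trace $\tr_{\F_p/\F_p}$ is the identity (so $\psi(u) = e^{2\pi i u/p}$ is a genuine multiplicative-in-the-additive-sense character and the phase factors split across coordinates).
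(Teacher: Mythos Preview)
Your proof is correct and is exactly the same approach as the paper's: the key point is the factorization $\hat{\one_S}(b_1,\ldots,b_n)=\prod_{i=1}^n\hat{\one_{S_i}}(b_i)$, from which the result follows immediately via the definition (\ref{irreg}). The paper states this in one line, while you have simply spelled out the details.
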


\begin{proof}This follows from (\ref{irreg}) and the fact that
$\hat{\one_S}(b_1,\ldots,b_n)=\prod_{i=1}^n\hat{\one_{S_i}}(b_i).$\end{proof}

\begin{lem}\label{lema2}For any subset $T\ss\F_p$ and $\al,\be\in\F_p,\al\neq 0$ if we denote $T'=\{\al a+\be|a\in T\}$ then $\irreg(T')=\irreg(T)$.\end{lem}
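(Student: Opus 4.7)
The plan is to show directly that the bijection $a \mapsto \alpha a + \beta$ preserves the Fourier-analytic quantity defining $\irreg$. First, since $\alpha \neq 0$ the map is a bijection of $\F_p$, so $|T'| = |T|$; the prefactor $p/|S|$ in the definition (\ref{irreg}) of $\irreg$ is therefore the same for $T$ and $T'$. Hence it suffices to show $\sum_{b \in \F_p} |\hat{\one_{T'}}(b)| = \sum_{b \in \F_p} |\hat{\one_{T}}(b)|$.

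Next, I would compute $\hat{\one_{T'}}(b)$ via the change of variables $a' = \alpha a + \beta$ in the defining sum (\ref{fourier}):
\begin{equation*}
\hat{\one_{T'}}(b) = \frac{1}{p}\sum_{a \in T} e^{-2\pi i \tr_{\F_p/\F_p}((\alpha a + \beta)b)/p} = e^{-2\pi i \tr(\beta b)/p}\,\hat{\one_T}(\alpha b).
\end{equation*}
The affine shift $\beta$ contributes only a unimodular phase, which vanishes upon taking absolute values, so $|\hat{\one_{T'}}(b)| = |\hat{\one_T}(\alpha b)|$ for every $b \in \F_p$.

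Finally, since $\alpha \neq 0$, the map $b \mapsto \alpha b$ is a bijection of $\F_p$, so summing over $b$ gives
\begin{equation*}
\sum_{b \in \F_p}|\hat{\one_{T'}}(b)| = \sum_{b \in \F_p}|\hat{\one_T}(\alpha b)| = \sum_{b' \in \F_p}|\hat{\one_T}(b')|.
\end{equation*}
Combined with $|T'| = |T|$ and the definition (\ref{irreg}) of $\irreg$, this yields $\irreg(T') = \irreg(T)$. There is no real obstacle here; the only point that requires care is tracking the two effects of the affine change of variables separately: the translation $\beta$ produces a pure phase that disappears under the absolute value, and the dilation $\alpha$ merely permutes the frequency variable, leaving the $\ell^1$-norm of $\hat{\one_T}$ invariant.
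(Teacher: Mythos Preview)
Your proof is correct and follows essentially the same route as the paper: compute $\hat{\one_{T'}}(b)=e^{-2\pi i\beta b/p}\hat{\one_T}(\alpha b)$, take absolute values to kill the phase, and use that $b\mapsto\alpha b$ is a bijection of $\F_p$. The only cosmetic difference is that you make the equality $|T'|=|T|$ explicit, whereas the paper uses it silently.
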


\begin{proof}We have \begin{multline*}\hat{\one_{T'}}(b)=\sum_{a\in T'}e^{-2\pi i ab/p}=
\sum_{a\in T}e^{-2\pi i(\al a+\be)b/p}=\\=e^{-2\pi i \be b/p}\sum_{a\in T}e^{-2\pi i a(\al b)/p}=
e^{-2\pi i \be b/p}\cdot\hat{\one_{T}}(\al b)\end{multline*} and therefore
$\abs{\hat{\one_{T'}}(b)}=\abs{\hat{\one_{T}}(\al b)}$. Since $\al\neq 0$ we have
$$\irreg(T')=\frac{p}{|T'|}\sum_{b\in\F_p}\abs{\hat{\one_{T'}}(b)}=
\frac{p}{|T|}\sum_{b\in\F_p}\abs{\hat{\one_{T}}(\al b)}=\irreg(T).$$
\end{proof}

Consequently it is enough to estimate $\irreg(I)$ for an interval of the form $I=\{0,1,\ldots,H-1\}\ss\F_p$.
We have $\hat{\one_I}(0)=H/p$ and for $b\neq 0$
\begin{equation}\label{e1}\abs{\hat{\one_I}(b)}=\frac{1}{p}\abs{\sum_{k=0}^{H-1}e^{-2\pi ib}}=\frac{1}{p}\abs{\frac{e^{-2\pi i bH}-1}{e^{-2\pi i b}-1}}\le \frac{2}{p\abs{\sin\pi b/p}}.\end{equation}
We can identify $b$ with a nonzero integer in the interval $[-(p-1)/2,(p-1)/2]$ (we may assume that $p>2$). Then by (\ref{e1}) and the inequality $|\sin \pi t|\ge 2t$ valid for $0<|t|<1/2$ we have
$$\sum_{0\neq b\in\F_p}\abs{\hat{\one_I}(b)}\le 8\sum_{b=1}^{(p-1)/2}\frac{1}{b}\le 8\log p$$
and so $$\irreg(I)\le 1+\frac{p}{H}\cdot 8\log p\le \frac{9p\log p}{H}.$$

Now by Lemma \ref{lema1} and Lemma \ref{lema2} for $S=I_1\times\ldots\times I_n$ we have
$$\irreg(S)\le\prod_{i=1}^n \frac{9p\log p}{H_i},$$ which is precisely (\ref{irregint}).

\bibliography{../Tex/mybib}
\bibliographystyle{amsrefs}

\end{document}